\documentclass[12pt]{article}
\usepackage{amsmath,amssymb,amsthm}

\textheight 9in \topmargin -0.5in \oddsidemargin 0.25in \textwidth
6in \evensidemargin \oddsidemargin \marginparsep 3pt
\marginparwidth 67pt

%  Cause equations to be numbered within sections.
\numberwithin{equation}{section}

\newtheorem{thm}{Theorem}

\newcommand{\n}{\nonumber}

\renewcommand{\t}{\theta}

\newcommand{\np}{\nabla ^\perp}

\newcommand{\gl}{{\gamma}}

\newcommand{\cc}{\mathcal{ D}}

\newcommand{\ts}{T}
\renewcommand{\tt}{(T-t)}

\renewcommand{\o}{\omega}

\renewcommand{\O}{\Omega}

\newcommand{\bb}{\begin{equation}}
\newcommand{\ee}{\end{equation}}
\newcommand{\bq}{\begin{eqnarray}}
\newcommand{\eq}{\end{eqnarray}}
\newcommand{\bqn}{\begin{eqnarray*}}
\newcommand{\eqn}{\end{eqnarray*}}
\newcommand{\pt}{\partial}
\newcommand{\de}{\delta}

% --------------------------------------------------------------------
\begin{document}
\title{Remark on Luo-Hou's ansatz for a self-similar solution to the 3D Euler equations}
\author{Dongho Chae$^*$  and Tai-Peng Tsai$^\dagger$\\
\ \\
 $*$Department of Mathematics\\
Chung-Ang University\\
 Seoul 156-756, Republic of Korea\\
email: dchae@cau.ac.kr\\
and \\
$\dagger$Department of Mathematics\\
 University of British Columbia\\
    Vancouver, B.C. V6T 1Z2, Canada\\
email: ttsai@math.ubc.ca\\
and\\
$\dagger$Center for Advanced Study in Theoretical Sciences\\
National Taiwan University\\
Taipei, Taiwan}
\date{}
\maketitle

\begin{abstract}

In this note we show that Luo-Hou's ansatz for the self-similar solution to the axisymmetric solution to the 3D Euler equations
leads to triviality of the solution under suitable decay condition of the blow-up profile. The equations for the blow-up profile reduces to an over-determined system of partial differential equations, whose only solution with decay is the trivial solution.
We also propose a generalization of Luo-Hou's ansatz.  Using the vanishing of the normal velocity at the boundary, we show that this  generalized  self-similar ansatz also leads to a trivial solution.
These results show that the self-similar ansatz may be valid either only in a time-dependent region which shrinks to the boundary circle at the self-similar rate, or under different boundary conditions at spatial infinity of the self-similar profile.
\\
\ \\
\noindent{\bf AMS Subject Classification Number:} 35Q31, 76B03,
76W05\\
  \noindent{\bf
keywords:} Euler equations, finite time blow-up, Luo-Hou's ansatz, self-similar solution
\end{abstract}

\section{Axisymmetric 3D Euler system}
\setcounter{equation}{0}

We are concerned with the homogeneous incompressible 3D Euler equations,
$$
(E) \left\{\aligned  &\partial_t  u+u\cdot \nabla u =-\nabla p,\label{e1}\\
&\mathrm{div }\, u=0,
\endaligned \right.
$$
 where $u(x,y,z,t)$ is the velocity vector field, and $p=p(x,y,z,t)$ is the scalar
  pressure.
  We consider an axisymmetric solution of the Euler equations,
which means that the velocity field $u$ has the representation
$$
u=u^r (r,z,t)e_r +u^\theta (r,z,t)e_\theta +u^z
(r,z,t)e_z
$$
 in the cylindrical coordinate system $(r,\theta,z)$, where
 $$e_r = \left(\frac{x}{r}, \frac{y}{r}, 0\right), \quad
 e_\theta = \left(-\frac{y}{r}, \frac{x}{r}, 0\right),\quad e_z=(0,0,1),\quad
 r=\sqrt{x^2 +y^2}.
 $$
  Let $\psi$ be the vector stream function satisfying, curl $\psi=u$ and  div $\psi=0$, and $\psi^\theta$ be its angular component.
 Let $\omega=\textrm{curl } u$ be the vorticity vector and $\omega^\theta$ be its angular component.
 Then, the Euler equations for the axisymmetric
solution can be written as (see \cite{maj})
\bq
\label{ae1}
 &&\partial_t u^\theta +u^r\partial_r u^\theta  +u^z\partial_z u^\theta =-\frac{u^r u^\t}{ r} ,\\
  \label{ae2}
&&\partial_t \o^\theta  +u^r\partial_r \o^\theta  +u^z\partial_z \o^\theta =
\frac{2u^\theta}{r} \partial_z u^\theta+\frac{1}{r} u^r\o^\theta\\
 \label{ae3}
&&-\left(\Delta -\frac{1}{r^2}\right)\psi^\theta =\o^\theta.
\eq
 In order to remove the artificial
singularity at $r=0$ of the original system we introduce $(u_1, \o_1, \psi_1)$ defined by
  \bb\label{def}
u_1=\frac{u^\theta}{r},\quad \o_1=\frac{\o^\theta}{r}, \quad  \psi_1=\frac{\psi^\theta}{r}.
  \ee
 Then, the system (\ref{ae1})--(\ref{ae3})
  can be written  in terms of  $( u_1,\o_1,\psi_1)$ as
 \bq\label{01}
 &&\partial_t u_1 +u^r \partial_r  u_1 +u^z \partial_z u_1=2u_1 \partial_z \psi_1,\\
 \label{02}
 &&\partial_t \o_1 +u^r \partial_r  \o_1 +u^z \partial_z \o_1= \partial_z (u_1^2 ),\\
 \label{03}
 &&-\left( \partial_r ^2 +\frac{3}{r} \partial_r +\partial_z^2 \right)\psi_1=\o_1,
 \eq
 where
 \bb\label{conv}  u^r=-r\partial_z \psi_1,\quad u^z=2\psi_1+ r\partial_r \psi_1.
 \ee

\section{Lou-Hou's self-similar ansatz}
\setcounter{equation}{0}

 We consider the system (\ref{01})--(\ref{03}) in the infinite cylinder
 $$\{ (r,z) \in \Bbb R^2\, |\, 0< r< 1, -\infty<z<\infty\}$$
on $0\leq t<T$, where $T$ is a possible blow-up time.  For a possible blow-up scenario at the
 circle on the boundary of the cylinder, observed numerically in \cite{hou}, Luo-Hou  \cite[\S4.7]{hou} proposed the following self-similar ansatz  for the solutions to (\ref{01})--(\ref{03}), 
 \bq\label{11}
 u_1(r,z, t)&=&(\ts-t)^{-1+\frac{\gl}{2}} U\left(R,Z \right),\\
 \label{12}
 \o_1(r,z,t)&=&(\ts-t)^{-1} \O\left(R,Z\right),\\
 \label{13}
   \psi_1(r,z,t)&=&(\ts-t)^{-1+2\gl} \Psi\left(R,Z\right),
   \eq
 where
\bb\label{13a}
 R=\frac{r-1}{(\ts-t)^\gl}, \qquad Z=\frac{z}{(\ts-t)^\gl},
\ee
 and  $\gl\geq 2/5$,  which is valid on a neighborhood of the circle on the boundary for all time sufficiently close to the possible blow-up time. The region  $D_\infty(t)$ of self-similarity studied in \cite{hou} is defined dynamically as where the vorticity magnitude exceeds one half of its maximal magnitude at each time $t$. 
 It is observed
numerically to shrink to the boundary circle as $t\to T_-$. If  (\ref{11})--(\ref{13}) are valid
in the set $D_\infty(t)$, the diameter of $D_\infty(t)$ should be proportional to $(T-t)^{\gamma}$ and corresponds to a fixed finite set in the left half $RZ$-plane. However the self-similar ansatz (\ref{11})--(\ref{13}) could be valid in a larger space-time set.

For our analysis below, we will assume that the self-similar ansatz \eqref{11}--\eqref{13} is valid either in the space-time region
 \bb \label{thm2region}
\mathcal{ C}_{\delta,T} :=\{ (r,z,t)\in \Bbb R^3\, |\, 1-\de<r<1, \quad -\de <z<\de , \quad T-\de<t<T\},
\ee
for some $0<\delta \ll 1$,
or in the region
 \bb \label{thm2region2}
\mathcal{ W}_{\delta(t)} :=\{ (r,z,t)\in \mathbb{ R}^3\, |\, 
1-\de(t)< r< 1, \quad -\de(t)<z<\de(t) , \quad T_0<t<T\}
\ee
where $\de(t)>0$ is a decreasing function of $t\in (T_0,T)$ for some $T_0<T$
and 
\bb \label{thm2region2b}
\lim _{t \to T_-}\de(t)=0,
\quad \limsup _{t \to T_-}(T-t)^{-\gamma}\de(t)=\infty.
\ee
   Note that,  in either case, $(U, \O, \Psi)$ is defined on the left half-plane,
\bb
\cc=\{Y=(R, Z )\in \Bbb R^2\, |\, -\infty<R \leq 0, -\infty<Z<\infty \}.
\ee
We will verify that the above ansatz reduces to the triviality for the solution to (\ref{01})--(\ref{03}).
%Therefore one
%should be very careful in interpreting Luo-Hou's numerical result.

\begin{thm}\label{thm1}
Let $(u_1, \o_1, \psi_1)$ be  a classical solution  to the system (\ref{01})--(\ref{03}) 
with the representation
\eqref{11}--\eqref{13}, $0<\gamma<\infty$, in either the set $\mathcal{ C}_{\delta,T}$
defined by \eqref{thm2region}, or in the set $\mathcal{ W}_{\delta(t)}$ defined by 
\eqref{thm2region2}--\eqref{thm2region2b}.
We assume the following asymptotic condition for the blow-up profiles $(U, \O)$,
\bb
\label{26}
|U(Y)|+|\O(Y)|=o(1) \quad\mbox{as $|Z|\to \infty$}.
\ee
Then,  $u_1=\o_1=0$, and $\psi_1=\psi_1(z,t)=a\tt^{-1+\gl}z+b\tt^{-1+2\gl}$ for some constants $a, b$.
\end{thm}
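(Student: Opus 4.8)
The plan is to substitute the ansatz \eqref{11}--\eqref{13} into \eqref{01}--\eqref{03}, rewrite every term as a power of $(T-t)$ times a function of $(R,Z)$, and then exploit the hypothesis that the ansatz solves the system \emph{exactly} on a space-time set reaching $t=T$. The crucial point is that for a fixed profile point $(R,Z)\in\cc$ with $R<0$, the physical point $\bigl(1+R(T-t)^{\gamma},\,Z(T-t)^{\gamma}\bigr)$ together with the time $t$ lies in $\mathcal{C}_{\de,T}$ for all $t$ in some nonempty interval $(T-s_\ast,T)$, and lies in $\mathcal{W}_{\de(t)}$ along the sequence of times produced by $\limsup_{t\to T_-}(T-t)^{-\gamma}\de(t)=\infty$; hence each scalar identity coming from \eqref{01}--\eqref{03} holds for all such $t$, and since the profile functions are $t$-independent, the coefficient of each distinct power of $(T-t)$ must vanish separately (the identities then extending to $R=0$ by continuity). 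This turns \eqref{01}--\eqref{03} into an over-determined system for $(U,\O,\Psi)$.

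First I would carry out the substitution, using $\pt_tR=\gamma(T-t)^{-1}R$, $\pt_tZ=\gamma(T-t)^{-1}Z$, $r=1+R(T-t)^{\gamma}$, and the conventions \eqref{conv} for $u^r,u^z$. Substituting into \eqref{03} and multiplying by $(T-t)$ gives
\[
-\Psi_{RR}-\Psi_{ZZ}-\frac{3(T-t)^{\gamma}}{1+R(T-t)^{\gamma}}\,\Psi_R=\O ;
\]
letting $t\to T_-$ yields $-\Psi_{RR}-\Psi_{ZZ}=\O$, and the remaining term then forces $\Psi_R\equiv0$, so $\Psi=\Psi(Z)$ and $\O=\O(Z)=-\Psi''(Z)$ depend on $Z$ only. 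In \eqref{01} exactly two powers, $(T-t)^{-2+\gamma/2}$ and $(T-t)^{-2+3\gamma/2}$, occur --- the second coming from the zeroth-order part $2\psi_1$ of $u^z=2\psi_1+r\pt_r\psi_1$, from the expansion $r=1+R(T-t)^{\gamma}$, and from the right-hand side $2u_1\pt_z\psi_1$ --- and similarly \eqref{02} produces the two powers $(T-t)^{-2}$ and $(T-t)^{-2+\gamma}$. Separating coefficients and using $\Psi_R\equiv0$ and $\O_R\equiv0$, the leading-order relations become
\[
\Bigl(1-\tfrac{\gamma}{2}\Bigr)U+\gamma RU_R+\gamma ZU_Z-\Psi'(Z)\,U_R=0,
\qquad
\O(Z)+\gamma Z\,\O'(Z)=2UU_Z,
\]
while the subleading relations read $-R\Psi'(Z)U_R+2\Psi(Z)U_Z-2U\Psi'(Z)=0$ and $\Psi(Z)\,\O'(Z)=0$; I expect these to reduce to $0=0$ once the profile is shown to vanish.

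Next comes the endgame, driven by $\pt_Z(U^2)=\O(Z)+\gamma Z\O'(Z)$ and the decay \eqref{26}. Since the right-hand side depends on $Z$ alone, so does $\pt_Z(U^2)$, whence $U(R,Z)^2=H(R)+F(Z)$; the decay \eqref{26} forces $U(R,Z)\to0$ as $|Z|\to\infty$ for each fixed $R$, which pins $H$ down to a constant, so $U^2$ --- and then $U$ itself, using that $U$ is continuous and that the half-line $\{R\le0\}$ is connected, so $U$ cannot change sign as $R$ varies --- depends on $Z$ only. In particular $U_R\equiv0$, so the first displayed leading relation collapses to the ordinary differential equation $(1-\tfrac{\gamma}{2})U+\gamma ZU'(Z)=0$, whose general solution on $\{Z>0\}$, resp.\ $\{Z<0\}$, is $c_\pm|Z|^{-(2-\gamma)/(2\gamma)}$; a classical profile that is bounded near $Z=0$ and tends to $0$ as $|Z|\to\infty$ is forced to have $c_\pm=0$, i.e.\ $U\equiv0$, hence $u_1=(T-t)^{-1+\gamma/2}U\equiv0$. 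Substituting $U\equiv0$ back gives $\O(Z)+\gamma Z\O'(Z)=0$, with general solution $c_\pm|Z|^{-1/\gamma}$, which is incompatible with continuity at $Z=0$ unless $c_\pm=0$; hence $\O\equiv0$ and $\o_1=(T-t)^{-1}\O\equiv0$. Finally \eqref{03} reduces to $\Psi''\equiv0$, so $\Psi(Z)=aZ+b$ for constants $a,b$, and therefore $\psi_1=(T-t)^{-1+2\gamma}(aZ+b)=a(T-t)^{-1+\gamma}z+b(T-t)^{-1+2\gamma}$, as claimed.

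The step I expect to be the main obstacle is the separation into powers of $(T-t)$ --- specifically, the assertion that a power actually present cannot be cancelled against another one --- because this is exactly where the \emph{exact} (rather than merely leading-order) nature of the Luo--Hou ansatz enters: it is what forces $\Psi_R\equiv0$ out of \eqref{03} and what splits each of \eqref{01} and \eqref{02} into two profile equations, rendering the system over-determined. A further point needing care is checking, in the case of the region $\mathcal{W}_{\de(t)}$, that the times selected by \eqref{thm2region2b} are plentiful enough to run this argument; once that and the elementary continuity/connectedness upgrade from ``$U^2$ depends on $Z$ only'' to ``$U$ depends on $Z$ only'' are settled, what remains are two routine Euler-type ordinary differential equations.
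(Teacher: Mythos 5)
Your proposal is correct and follows essentially the same route as the paper: separate the powers of $(T-t)$ to get the over-determined system, deduce $\partial_R\Psi=0$ hence $\Psi=\Psi(Z)$, $\Omega=\Omega(Z)$, use the $Z$-decay to get $U=U(Z)$, and finish with the resulting ODEs. The only (harmless) differences are that you solve the Euler-type ODEs explicitly where the paper invokes a maximum principle, and you spell out two points the paper leaves implicit --- the justification for matching powers of $(T-t)$ on the two admissible space-time regions, and the continuity/connectedness upgrade from $U^2=U^2(Z)$ to $\partial_R U=0$.
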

\noindent{\em Remark}. In Section 4 we  partially explain why the condition (\ref{26}) is a natural decay condition for the blow-up profiles. Note that we do not assume any decay in $R$. There is no boundary condition for (\ref{01})--(\ref{03}) at $r=1$.

\begin{proof} [Proof  of Theorem 1]
We first observe from (\ref{conv}) that
  \bq\label{14}
  u^r&=& -\{ \tt ^\gl R +1\}(\ts -t)^{-1+\gl} \partial_Z \Psi, \\
  \label{15}
   u^z&=& 2(\ts -t) ^{-1+2\gl}\Psi +\{ \tt ^\gl R +1\}  (\ts-t)^{-1+\gl} \partial_R \Psi.
   \eq
   Substituting (\ref{11})--(\ref{13}) into (\ref{01})--(\ref{03}), one obtains
   \bq\label{16}
   &&\left(1-\frac{\gl}{2}\right)(\ts-t)^{-2+\frac{\gl}{2}} U+\gl \tt^{-2+\frac{\gl}{2}} (R\partial_R +Z\partial_Z ) U\n \\
    &&\quad-\{ \tt ^\gl R +1\}\tt ^{-2+\frac{\gl}{2}} \partial_Z \Psi \partial_R U\n \\
   &&\qquad+\left[ 2\tt ^{-1+2\gl}\Psi +\{ \tt ^\gl R +1\} \tt ^{-1+\gl} \partial_R \Psi \right]\tt ^{-1-\frac{\gl}{2}}\partial_Z U\n\\
   &&\quad=2\tt ^{-2+\frac{3}{2}\gl}U\partial_Z \Psi,
   \eq
   \bq\label{17}
   && \tt^{-2}\O+\gl \tt ^{-2} (R\partial_R +Z\partial_Z )\O\n \\
   &&\quad-
   \{ \tt ^\gl R +1\}\tt^{-2} \partial_Z \Psi \partial_R \O\n\\
   &&\qquad+\left[ 2\tt ^{-1+2\gl} \Psi +\{ \tt ^\gl R +1\}\tt ^{-1+\gl}\partial_R \Psi \right] \tt ^{-1-\gl} \partial_Z \O\n\\
   &&\quad= \tt^{-2} \partial_Z U^2,
   \eq
   and
   \bq\label{18}
  - \tt^{-1}(\partial_R^2 \Psi + \partial_Z ^2 \Psi )
   -\frac{3\tt^{-1+\gl}}{\{ \tt ^\gl R +1\}}  \partial_R \Psi
    =\tt ^{-1}\O.
   \eq
   The equations (\ref{16})--(\ref{18}) are valid for all $t$ sufficiently close to $\ts$.
   We obtain from (\ref{16})--(\ref{18}) the equations for the most dominant terms as $t\nearrow \ts$,
   \bq\label{19}
   &&\left(1-\frac{\gl}{2}\right) U+\gl Y\cdot \nabla U +\np\Psi \cdot \nabla U=0,\\
   \label{20}
  &&\O+\gl Y\cdot \nabla\O +\np\Psi \cdot \nabla \O=\partial_Z U^2,\\
  \label{21}
  &&\qquad-\Delta\Psi =\O,
   \eq
   where we  denoted
 $$\nabla = (\partial_R, \partial_Z ),\quad \np=(-\partial_Z, \partial_R), \quad
 \Delta =\partial_R^2 +\partial_Z ^2 .$$
  The next dominant equations from  (\ref{16})--(\ref{18}) as $t \nearrow \ts$ are
   \bq\label{22}
   &&R \np \Psi \cdot \nabla U+2\Psi \partial_Z U=2U\partial_Z \Psi,\\
   \label{23}
   && R \np \Psi \cdot \nabla \O+2\Psi \partial_Z \O=0,\\
   \label{24}
   && \qquad\partial_R \Psi=0.
   \eq
 From (\ref{24}) we have $\Psi=\Psi(Z)$ on $\cc$. From this and (\ref{21}) we also have $\O=\O(Z)$.
 Therefore, from (\ref{20}) we have $ U^2(Y)= f(Z)+g(R)$ for some functions $f, g$. Since $U^2$ vanishes as $|Z|\to \infty$,
 $g=$ constant independent of $R$, and we have $U=U(Z)$. Thus, (\ref{19}) reduces to
 $$
 \left(1-\frac{\gl}{2}\right) U+\gl Z \partial_Z U=0.
 $$
 If $\gl\neq 2$, then the maximum principle together with the condition $|U|=o(1)$ as $|Z|\to \infty$ implies $U=0$.
 If $\gl=2$, then from $2Z\partial_Z U=0$ we deduce $U(Z)=$ constant$=0$ for $Z\neq0$. By continuity $U|_{Z=0}=0$ also.
 Substituting $U=0$, $\O=\O(Z)$ and $\Psi =\Psi (Z)$ into (\ref{20}), we find
 $$\O+ \gl Z\partial_Z \O=0
 $$
with  $\gl >0$. The maximum principle together with the condition $|\O|=o(1)$ as $|Z|\to \infty$ implies $\O=0$.
 From (\ref{21}) we find that the function  $\Psi$ satisfies
 $\Psi^{\prime\prime}(Z)=0$, and we have $\Psi (Z)=aZ +b$  on $\cc$.
\end{proof}
%-------------------------------------------------------------------------------------------------
\section{Generalized self-similar ansatz}
\setcounter{equation}{0}

Unlike the usual self-similar ansatz for a singularity at the origin, the terms in \eqref{16}--\eqref{18} do not have equal factors of powers of $T-t$. Indeed, they differ by integer powers of $(T-t)^\gl$.
Thus it seems natural to add higher order terms to  Luo-Hou's ansatz and propose the following
\bq\label{11k}
 u_1(r,z, t)&=&(\ts-t)^{-1+\frac{\gl}{2}} \sum_{k=0}^\infty \tt^{k\gl}U_k(R,Z),\\
 \label{12k}
 \o_1(r,z,t)&=&(\ts-t)^{-1}\sum_{k=0}^\infty \tt^{k\gl}\Omega_k(R,Z),\\
 \label{13k}
   \psi_1(r,z,t)&=&(\ts-t)^{-1+2\gl} \sum_{k=0}^\infty \tt^{k\gl}\Psi_k(R,Z).
   \eq
This ansatz contains \eqref{11}--\eqref{13} as a special case by setting $U_k=\O_k=\Psi_k=0$ for $k>0$. The equations for the most dominant terms as $t\to T$ are the same as \eqref{18}--\eqref{20} with $U,\O,\Psi$ replaced by
 $U_0,\O_0,\Psi_0$, see  \eqref{28}--\eqref{30} below. The equations for the next dominant terms are however different:
\bq
\nonumber
  &&
\left(1-\frac{3\gl}{2}\right) U_1+\gl Y\cdot \nabla U_1 +\np\Psi _0\cdot \nabla U_1
+\np\Psi _1\cdot \nabla U_0 \\
\label{3.4}
&&\qquad
+R \np \Psi_0 \cdot \nabla U_0+2\Psi _0\partial_Z U_0=2U_0\partial_Z \Psi_0,\\
 \nonumber
   &&
(1-\gl)\O_1+\gl Y\cdot \nabla\O_1 +\np\Psi_0 \cdot \nabla \O_1+\np\Psi_1 \cdot \nabla \O_0\\
&&\qquad
 +R \np \Psi_0 \cdot \nabla \O_0+2\Psi_0 \partial_Z \O_0=\partial_Z ( 2U_0 U_1),\\
   \label{3.6}
   && -\Delta\Psi_1+ \partial_R \Psi_0=\O_1.
   \eq
Our argument in the previous section does not work for such an ansatz.

However, we will show that such generalized ansatz still has no nontrivial solution
using the following observation on the boundary condition.
In Section 2 we did not assume any boundary condition on the $Z$-axis. However, since
$u^r=-r \partial_z \psi_1$ has the natural boundary condition $u^r=0$ at $r=1$, it is natural to assume
\bb
\label{Psi-assume}
\pt_Z \Psi|_{R=0}=0.
\ee
With a similar assumption on $\Psi_k$, Theorem \ref{thm2} below asserts the triviality of the ansatz \eqref{11k}--\eqref{13k}, which gives an alternative proof of  Theorem \ref{thm1} if we also assume decay in $R$ in \eqref{26}.

\begin{thm}\label{thm2}
Let $(u_1, \o_1, \psi_1)$ be  a classical solution  to the system (\ref{01})--(\ref{03}) 
with the representation
\eqref{11k}--\eqref{13k} for some $0<\gamma<\infty$, in either the set $\mathcal{ C}_{\delta,T}$
defined by \eqref{thm2region}, or in the set $\mathcal{ W}_{\delta(t)}$ defined by 
\eqref{thm2region2}--\eqref{thm2region2b}.
We assume the following  conditions:
\bb
U_k,\O_k,\Psi_k \in C^1_{loc}(\overline{\mathcal{D}}), \quad  \forall k \ge 0,
\ee
\bb
\label{UOPsi-infty}
|U_k(Y)|+|\O_k(Y)|=o(1) ,\quad  |\nabla \Psi_k(Y)|=o(|Y|) \quad\mbox{as $|Y|\to \infty$},\quad  \forall k \ge 0,
\ee
\bb
\label{Psi-BC}
\pt_Z \Psi_k|_{R=0}=0,\quad  \forall k \ge 0,
\ee
and, for some even integer $p$,
\bb
\label{integral-assume}
\lim_{\rho \to \infty}
\int_{\rho<|Y|<2\rho} (U_k^{p} + \O_k^{p}) dY= 0 ,\quad \forall k \le  1/ \gamma.
\ee
Then  $u_1=\o_1=0$ and $\nabla \psi_1=0$.
\end{thm}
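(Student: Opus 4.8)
My plan is to run two successive inductions on the index $k$: first to show that $U_k=\O_k=0$ for every $k\ge0$, and then, using this, that $\nabla\Psi_k=0$ for every $k\ge0$. To set up, exactly as in the proof of Theorem~\ref{thm1} I would substitute \eqref{11k}--\eqref{13k} into \eqref{01}--\eqref{03} and match the coefficients of the powers of $T-t$; this is legitimate because the rescaling \eqref{13a} carries $\mathcal{C}_{\delta,T}$ (resp.\ $\mathcal{W}_{\delta(t)}$) onto subsets of $\cc$ that exhaust $\cc$ as $t\to T_-$, by \eqref{thm2region2b}. One obtains, for every $n\ge0$, the triple
\begin{align*}
&\bigl(1-\tfrac{\gl}{2}-n\gl\bigr)U_n+\gl\,Y\cdot\nabla U_n+\sum_{i+j=n}\np\Psi_i\cdot\nabla U_j\\
&\qquad\qquad+\sum_{i+j=n-1}\bigl(R\,\np\Psi_i\cdot\nabla U_j+2\Psi_i\,\pt_Z U_j\bigr)=2\sum_{i+j=n-1}U_i\,\pt_Z\Psi_j,\\
&(1-n\gl)\O_n+\gl\,Y\cdot\nabla \O_n+\sum_{i+j=n}\np\Psi_i\cdot\nabla \O_j\\
&\qquad\qquad+\sum_{i+j=n-1}\bigl(R\,\np\Psi_i\cdot\nabla \O_j+2\Psi_i\,\pt_Z \O_j\bigr)=\sum_{i+j=n}\pt_Z(U_iU_j),\\
&-\Delta\Psi_n=\O_n+\sum_{j=0}^{n-1}c_{n,j}(R)\,\pt_R\Psi_j,
\end{align*}
the $c_{n,j}$ being polynomials in $R$ with $c_{n,n-1}$ a nonzero constant; for $n=0$ these reduce to \eqref{19}--\eqref{21} (with $U_0,\O_0,\Psi_0$) and for $n=1$ to \eqref{3.4}--\eqref{3.6}.

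\emph{First induction.} Suppose $U_j=\O_j=0$ for all $j<n$. A short bookkeeping check shows that then every term of the $U_n$-equation except the first three vanishes --- each of the others carries a factor $U_j$, $\nabla U_j$ or $\pt_Z U_j$ with $j<n$ (the only seeming exception, $\np\Psi_n\cdot\nabla U_0$, vanishing because $U_0=0$ when $n\ge1$) --- so the equation collapses to the homogeneous first-order equation $c_n U_n+\gl\,Y\cdot\nabla U_n+\np\Psi_0\cdot\nabla U_n=0$, with $c_n:=1-\tfrac{\gl}{2}-n\gl$. If $n>1/\gl$ then $c_n<0$: since $|U_n|=o(1)$ at infinity by \eqref{UOPsi-infty}, $|U_n|$ attains its maximum $M$ at some $Y^*\in\overline{\cc}$, and at $Y^*$ both first-order terms vanish --- in the interior because $\nabla U_n(Y^*)=0$, and on $\{R=0\}$ because there $R=0$, $\pt_Z U_n(Y^*)=0$ (the restriction of $U_n$ to the $Z$-axis has an interior maximum), and $\pt_Z\Psi_0(Y^*)=0$ by \eqref{Psi-BC} --- whence $c_n M=0$, forcing $U_n\equiv0$. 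If $n\le1/\gl$ I would instead run an energy estimate: fix a large even $p$ with $c_n-2\gl/p\ne0$ (the bound \eqref{integral-assume} holds for all even exponents once it holds for one, because $|U_k|,|\O_k|=o(1)$), multiply the equation by $U_n^{\,p-1}$, and integrate over $\cc_\rho:=\cc\cap\{|Y|<\rho\}$; using $\operatorname{div}\np\Psi_0=0$, $\operatorname{div}Y=2$ and the fact that on $\{R=0\}$ one has $Y\cdot\nu=R=0$ and $\np\Psi_0\cdot\nu=-\pt_Z\Psi_0=0$, this gives
\[
\bigl(c_n-\tfrac{2\gl}{p}\bigr)\int_{\cc_\rho}U_n^{\,p}=-\frac1p\int_{\{|Y|=\rho\}\cap\cc}\bigl(\gl\rho+\np\Psi_0\cdot\tfrac{Y}{\rho}\bigr)U_n^{\,p}\,dS .
\]
By \eqref{integral-assume} and the mean value theorem there is a sequence $\rho_\ell\to\infty$ with $\rho_\ell\int_{\{|Y|=\rho_\ell\}\cap\cc}U_n^{\,p}\,dS\to0$; since $|\np\Psi_0|=o(|Y|)$, the right-hand side then tends to $0$ along $\rho_\ell$, so $\int_{\cc}U_n^{\,p}=0$ and $U_n\equiv0$. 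Substituting $U_n=0$ into the $\O_n$-equation collapses it to $(1-n\gl)\O_n+\gl\,Y\cdot\nabla\O_n+\np\Psi_0\cdot\nabla\O_n=0$ (its right-hand side $\sum_{i+j=n}\pt_Z(U_iU_j)$ vanishes since every summand has a factor $U_j$ with $j<n$ or $U_0$), and the same dichotomy, with $1-n\gl$ in place of $c_n$, gives $\O_n\equiv0$. This closes the first induction.

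\emph{Second induction.} Now all $\O_n=0$, so the third equation becomes $-\Delta\Psi_n=\sum_{j<n}c_{n,j}(R)\,\pt_R\Psi_j$. The key tool is a half-plane Liouville lemma: if $v\in C^1(\overline{\cc})$ satisfies $-\Delta v=\mu$ for a constant $\mu$, $\pt_Z v|_{R=0}=0$ and $|\nabla v|=o(|Y|)$, then $\mu=0$ and $v=aR+\mathrm{const}$. (Indeed $\pt_Z v$ is harmonic since $\Delta v$ is constant, vanishes on $\{R=0\}$ and is $o(|Y|)$; odd-reflecting across $\{R=0\}$ and invoking the Liouville theorem on $\R^2$ gives $\pt_Z v\equiv0$, so $v=v(R)$ with $-v''=\mu$; then $|v'|=|a-\mu R|=o(|Y|)$ forces $\mu=0$, so $v'\equiv a$.) Now induct on $n$: assuming $\pt_R\Psi_j=0$ for all $j<n$ (vacuous for $n=0$), the right-hand side of the $\Psi_n$-equation vanishes, so $-\Delta\Psi_n=0$ and the lemma gives $\Psi_n=a_nR+\mathrm{const}$; applying the lemma to the $\Psi_{n+1}$-equation --- whose right-hand side has now reduced (the $\O_{n+1}$-term and all $\pt_R\Psi_j$, $j<n$, being zero) to the single constant $c_{n+1,n}a_n$ with $c_{n+1,n}\ne0$ --- forces $a_n=0$. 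Hence $\pt_R\Psi_n=0$, and together with $\pt_Z\Psi_n\equiv0$ (since $\Psi_n$ is constant) this gives $\nabla\Psi_n=0$.

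Combining the two inductions, $u_1=(T-t)^{-1+\gl/2}\sum_k(T-t)^{k\gl}U_k=0$, $\o_1=(T-t)^{-1}\sum_k(T-t)^{k\gl}\O_k=0$, and $\nabla\psi_1=(T-t)^{-1+\gl}\sum_k(T-t)^{k\gl}\nabla\Psi_k=0$. The main obstacle, I expect, is the first induction: one has to verify the combinatorial collapse of all the coupling terms under the inductive hypothesis, and --- in the range $n\le1/\gl$, where the maximum principle is unavailable --- to extract from the merely averaged decay \eqref{integral-assume} a sequence of radii along which the boundary flux at spatial infinity vanishes, all while keeping the constant $c_n-2\gl/p$ away from zero.
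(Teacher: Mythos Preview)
Your argument is correct, but it is organized differently from the paper's. The paper runs a \emph{single} induction on $k$, proving $U_k=\Omega_k=\nabla\Psi_k=0$ together at each step; consequently its reduced equations for $U_k,\Omega_k$ at step $k\ge 1$ are the purely linear \eqref{19k}--\eqref{20k} with no $\Psi$-term at all, and the energy step is carried out with a smooth cutoff $\sigma_\rho$ rather than your sharp cutoff plus mean-value choice of radii. Your two-pass scheme instead keeps the transport term $\nabla^\perp\Psi_0\cdot\nabla$ in the $U_n,\Omega_n$ equations throughout the first induction and neutralizes it via \eqref{Psi-BC} at boundary maxima and via $|\nabla\Psi_0|=o(|Y|)$ in the flux estimate; this is a legitimate and slightly more delicate route. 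The most substantive difference is in the $\Psi$-step: the paper, having obtained $\Psi_k=aR+b$, invokes \eqref{UOPsi-infty} to force $a=0$, whereas the condition $|\nabla\Psi_k|=o(|Y|)$ is satisfied by any constant gradient; your look-ahead to the $\Psi_{n+1}$-equation, using that $c_{n+1,n}\neq 0$ so the Liouville lemma forces the constant source $c_{n+1,n}a_n$ to vanish, is a cleaner way to close this point under the stated hypotheses.
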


\begin{proof}
We will show that  $U_k= \O_k=0$ and $\nabla \Psi_k=0$ for $k\ge 0$ by induction.

We first observe that, as in Section 2, the equation for the most dominant terms are
 \eqref{16}--\eqref{18} with  $U$, $\Omega$ and $\Psi$ replaced by $U_0$, $\Omega_0$ and $\Psi_0$,
  \bq\label{28}
   &&\left(1-\frac{\gl}{2}\right) U_0+\gl Y\cdot \nabla U_0 +\np\Psi_0 \cdot \nabla U_0=0,\\
   \label{29}
  &&\O_0+\gl Y\cdot \nabla\O_0 +\np\Psi_0 \cdot \nabla \O_0=\partial_Z U_0^2,\\
  \label{30}
  &&\qquad-\Delta\Psi_0 =\O_0.
   \eq

We first consider $U_0$. First assume $\gamma \not =2$. Suppose $\sup U_0>0$. Since $U_0(Y)=o(1)$ as $|Y|\to \infty$, the maximum of $U_0$ is attained at some point $Y_0$.  If $Y_0$ is in the interior, then
\eqref{28} implies $U_0(Y_0)=0$, a contradiction to  $\sup U_0>0$. Thus $Y_0$ lies on the $Z$-axis. At $Y_0=(0,Z_0)$, we have $\np\Psi_0=(0,\pt_R \Psi_0)$ by assumption \eqref{Psi-BC}, and $\pt_Z U_0=0$ since $Y_0$ is a maximum point. Thus
\bb
\gl Y\cdot \nabla U_0 +\np\Psi_0 \cdot \nabla U_0=(\gl
Z_0 + \pt_R \Psi_0)\pt_Z U_0=0.
\ee
We get $\left(1-\frac{\gl}{2}\right) U_0(Y_0)=0$,  a contradiction to  $\sup U_0>0$. We conclude $\sup U_0=0$. Similarly we can show $\inf U_0=0$. Thus $U_0 \equiv 0$ in the case $\gamma \not =2$.

We now consider the case $\gamma=2$.
Fix a smooth nonincreasing function $\sigma: [0,\infty)\to [0,\infty)$
so that $\sigma(t)=1$ for $0\le t\le 1$ and $\sigma(t)=0$ for $t\ge 2$.
Using $pU_0^{p-1}\sigma_\rho$ as a test function where
 $\sigma_\rho(Y)=\sigma(|Y|/\rho)$ and $\rho>1$, and denoting $\cc_\rho=\cc \cap B_{3\rho}(0)$, we get
\begin{align*}
0&=-\int_{\cc_\rho} \left\{(\gl Y+\nabla^\perp \Psi_0)\cdot \nabla U\right\}pU_0^{p-1}\sigma_\rho dRdZ
\\
&=-\int_{\cc_\rho} \sigma_\rho(\gl Y+\nabla^\perp \Psi_0)\cdot \nabla U_0^{p}dRdZ
\\
&= \int_{\cc_\rho}  U_0^{p} \nabla  \cdot \left\{\sigma_\rho(\gl Y+\nabla^\perp \Psi_0)\right\}dRdZ -
\int_{\partial\cc_\rho}U_0^{p} \sigma_\rho(\gl Y+\nabla^\perp \Psi_0)\cdot \nu dRdZ.
\end{align*}
Note $\partial\cc_\rho = (\cc \cap \partial B_{3\rho})\cup (\partial  \cc \cap B_{3\rho})$. We have $\sigma_\rho=0$ on $\cc \cap \partial B_{3\rho}$ while
on $ \partial  \cc \cap B_{3\rho}$, $\nu=(1,0)$ and
\bb
(\gl Y+\nabla^\perp \Psi_0)\cdot \nu = \gl R- \partial_Z \Psi_0 = 0
\ee
 by assumption \eqref{Psi-BC} again.
Thus the boundary integral vanishes.
Also note $\nabla  \cdot [\sigma_\rho(\gl Y+\nabla^\perp \Psi_0)]=2\gl \sigma_\rho+
\nabla  \sigma_\rho\cdot (\gl Y+\nabla^\perp \Psi_0)$. We conclude
\begin{align*}
 2\gl  \int_{\cc}  U_0^{p}\sigma_\rho dRdZ
&=- \int_{\cc}  U_0^{p}  \nabla  \sigma_\rho\cdot (\gl Y+\nabla^\perp \Psi_0)dRdZ
\\
&\le C\int_{\rho<|Y|<2\rho} U_0^p (1+\frac {1}{\rho}|\nabla \Psi_0|)dRdZ.
\end{align*}
By assumptions \eqref{UOPsi-infty} and \eqref{integral-assume}, the last integral vanishes as $\rho \to \infty$. We conclude $U_0 \equiv0$.

Now $\O_0$-equation \eqref{29}  is similar to $U_0$-equation \eqref{28} since $U_0=0$.
By the same argument we  get $\O_0\equiv 0$.

By $\Psi_0$-equation \eqref{30}, $\Psi_0$ and $\nabla \Phi_0$ are harmonic. By the boundary conditions \eqref{UOPsi-infty} and \eqref{Psi-BC}, we get $\pt_Z \Psi_0=0$. Thus $\Psi_0=\Psi_0(R)$ is independent of $Z$. By \eqref{30} again, $\Psi_0=aR+b$. By \eqref{UOPsi-infty}, $a=0$. Thus
 $\nabla \Psi_0\equiv 0$.

\medskip

To show that $U_k, \O_k,\nabla \Psi_k=0$ for $k>0$, we prove by induction and assume it has been shown for all smaller $k$. Then  $U_k, \O_k,\Psi_k$ are the leading terms in \eqref{11k}--\eqref{13k} and they satisfy
  \bq\label{19k}
   &&\left(1-\frac{\gl}{2}-k\gl \right) U_k+\gl Y\cdot \nabla U_k =0,\\
   \label{20k}
  &&(1-k\gl)\O_k+\gl Y\cdot \nabla\O_k =0,\\
  \label{21k}
  &&\qquad-\Delta\Psi_k =\O_k.
   \eq
This system is similar to \eqref{28}--\eqref{30}, with the differences being: (i)
 the coefficients of the first terms of \eqref{19k} and \eqref{20k}, due to time derivatives of $(T-t)^{-1+\frac{\gl}{2}+k\gl}$ and $(T-t)^{-1+k\gl}$;
(ii) the nonlinear terms drop off due to  higher powers in $(T-t)^\gl$. Compare
\eqref{3.4}--\eqref{3.6}.

Now the same argument for the case $k=0$ goes through for the case of general $k$. It is in fact easier since $\Psi_k$ does not occur in \eqref{19k} and \eqref{20k}.
The boundary condition \eqref{Psi-BC} is used only once  to show $\pt_Z \Psi_k=0$ in $\cc$.
The decay condition \eqref{integral-assume} is needed only if $1-\frac{\gl}{2}-k\gl =0$ or
$1-k\gl =0$, which does not occur if $k > 1/\gl$. We conclude $U_k=\O_k=\nabla \Psi_k=0$ for all $k\ge 0$.
\end{proof}

\section{Discussion}
\setcounter{equation}{0}
Since the self-similar ansatz of Luo-Hou \cite{hou} is numerically observed, it is robust in some sense. One possible way to explain the discrepancy between \cite{hou} and our results is that the  self-similar singularity is only observed in \cite{hou} in a subregion  of
a time-dependent window  $\mathcal{W}_{\de(t)}$ defined in \eqref{thm2region2}, with 
\bb
\de(t) \le C (T-t)^\gamma.
\ee
In such a case, the self-similar profile $(U,\Omega,\Psi)(R,Z)$ is defined only for $(R,Z)$ in a finite region,
and
the decay condition \eqref{26} is no longer relevant.
Furthermore, even if the self-similar ansatz is valid in the region $\mathcal{D}_{\de,T}$ or in $\mathcal{W}_{\de(t)}$ with $\limsup (T-t)^{-\gamma} \de(t)=\infty$,
the decay condition \eqref{26} makes no distinction between a periodic boundary in the $z$ variable
 or an infinite cylinder, although  it is known that such a difference may change the blow-up behavior. For example, Titi \cite{Titi} reports that the equation
\bb
u_t - u_{xx}+u_x^4=0
\ee
has no blow-up with periodic boundary condition, but has blow-up with Dirichlet boundary condition.

We now explain how an energy consideration suggests \eqref{26} for small $\gamma$.
Suppose the self-similar ansatz \eqref{11}--\eqref{13}
is valid in
the region \eqref{thm2region} for $0<\de<1/2$.
Since the energy of solutions of Euler equations are uniformly bounded in time and $r\sim 1$,
\bb
 \int_{-\delta}^{\delta} \int _{1-\de}^1( |u^\theta|^2 + |u^r|^2 +|u^z|^2 ) drdz<C
\ee
holds uniformly for $t \in (T-\de,T)$.
 By \eqref{11}--\eqref{13} and (\ref{13a}), we get
\bb
\int_{-L}^L\int _{-L}^0 \left\{  (T-t)^{-2+3\gl}|U|^2 + (T-t)^{-2+4\gl} |\nabla \Psi|^2\right\} dRdZ<C
\ee
where $L=\de (T-t)^{-\gamma}  \in (\de^{1-\gamma},\infty)$.
In other words, we have
\bb
\frac 1{L^2}\int _{-L}^0 \int_{-L}^L  |U|^2  dRdZ <C L^{1-\frac 2\gl},\quad
\frac 1{L^2}\int _{-L}^0 \int_{-L}^L  |\nabla \Psi|^2  dRdZ <C L^{2-\frac 2\gl}
\ee
for all large $L$.
This suggests that, in average sense,
\bb
|U(Y)|\le C |Y|^{\frac12-\frac 1\gl},\quad  |\nabla \Psi(Y)|\le C |Y|^{1-\frac 1\gl}.
\ee
It implies $|\nabla \Psi(Y)|=o(|Y|)$ for all $\gl>0$, and $|U(Y)|=o(1)$ for $\gl<2$, as $|Y|\to \infty$. However, this consideration gives no information on the decay of $\O$.
Also note that, the blow-up rate observed in
\cite{hou} is
\bb
\gl \approx 2.91,
\ee
(see \cite[Table 4.9.1]{hou}, where $\gamma$ is denoted as $\hat \gamma_l$),
which is greater than 2, and hence the above consideration does not apply.

%----------------------------------------------------------------------------------------
 $$ \mbox{\bf Acknowledgements } $$
%\section*{Acknowledgements}
This research was initiated when the authors visited Tsinghua Sanya International
Mathematics Forum (TSIMF) in December 2013. We thank the referees for very helpful comments.
Chae's research is supported partially by NRF
  Grants no.~2006-0093854 and  no.~2009-0083521. Tsai's research is supported partially by
 NSERC grant 261356-13.

\end{document}